\newtheorem{prop}{Proposition}
\newtheorem{thm}[prop]{Theorem}
\newtheorem{lem}[prop]{Lemma}
\newtheorem{cor}[prop]{Corollary}
\newtheorem{rem}[prop]{Remark}
\theoremstyle{definition}
\begin{document}
\title{A proof of the Alexanderov's uniqueness theorem for convex surfaces in $\mathbb R^3$}
\author{Pengfei Guan}
\address{Department of Mathematics and Statistics\\ McGill University \\ Montreal, Canada}
\email{guan@math.mcgill.ca}
\author{Zhizhang Wang}
\address{Department of Mathematics\\ Fudan University \\ Shanghai, China}
\email{zwang@math.mcgill.ca}
\author{Xiangwen Zhang}
\address{Department of Mathematics\\ Columbia University \\ New York, US}
\email{xzhang@math.columbia.edu}
\date{\today}
\thanks{Research of the first author was supported in part by an NSERC Discovery Grant.}
\begin{abstract}
  We give a new proof of a classical uniqueness theorem of Alexandrov \cite{Alex56} using the weak uniqueness continuation theorem of Bers-Nirenberg \cite{BN2}. We prove a version of this theorem with the minimal regularity assumption: the spherical hessians of the corresponding convex bodies as Radon measures are nonsingular.
\end{abstract}
\subjclass{53A05, 53C24}

\maketitle

We give a new proof of the following uniqueness theorem of Alexandrov, using the Weak Unique Continuation Theorem of Bers-Nirenberg \cite{BN2}. 
\begin{thm}[Theorem 9 in \cite{Alex56}]\label{alex0} Suppose $M_1$ and $M_2$ are two closed strictly convex $C^2$ surfaces in $\mathbb R^3$, suppose $f(y_1,y_2)\in C^1$ is a function such that $\frac{\partial f}{\partial y_1} \frac{\partial f}{\partial y_2}>0$. Denote $\kappa_1\ge \kappa_2$ the principal curvatures of surfaces, and denote $\nu_{M_1}$ and $\nu_{M_2}$ the Gauss maps of $M_1$ and $M_2$ respectively. If
\begin{equation}\label{1}f(\kappa_1(\nu^{-1}_{M_1}(x), \kappa_2(\nu^{-1}_{M_1}(x))=f(\kappa_1(\nu^{-1}_{M_2}(x), \kappa_2(\nu^{-1}_{M_2}(x)), \quad \forall x\in \mathbb S^2,\end{equation}
then $M_1$ is equal to $M_2$ up to a translation.\end{thm}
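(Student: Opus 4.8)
My plan is to turn the geometric statement into a unique continuation problem for a linear, uniformly elliptic second order equation on $\mathbb S^2$ for the difference of the two support functions, and then to finish by a Poincar\'e--Hopf index count, which becomes available thanks to the local structure of solutions of such equations furnished by the Bers--Nirenberg theory of \cite{BN2}.

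\textbf{Step 1 (reduction to a linear elliptic equation).} I would first pass to support functions. Let $h_1,h_2\in C^2(\mathbb S^2)$ be the support functions of $M_1,M_2$, so that $\nu_{M_i}^{-1}(x)=\nabla h_i(x)+h_i(x)\,x$, the principal radii of curvature are the eigenvalues of the spherical Hessian $W_i:=\nabla^2 h_i+h_i\,g$ ($g$ the round metric), and $W_i>0$ is uniformly pinched on $\mathbb S^2$ because $M_i$ is strictly convex and $C^2$. After possibly replacing $f$ by $-f$ one may take $\partial f/\partial y_1,\partial f/\partial y_2>0$; then $G(W):=f\big(1/\lambda_{\min}(W),1/\lambda_{\max}(W)\big)$ is Lipschitz on the relevant compact set of positive definite matrices, and \eqref{1} is exactly $G(W_1)=G(W_2)$ pointwise on $\mathbb S^2$. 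Writing $h:=h_1-h_2$ and integrating $\tfrac{d}{dt}G\big(W_2+t(W_1-W_2)\big)$ over $t\in[0,1]$,
\begin{equation*}
0=G(W_1)-G(W_2)=a^{ij}\,(W_1-W_2)_{ij}=a^{ij}\big(\nabla_{ij}h+h\,g_{ij}\big),\qquad a^{ij}:=\int_0^1 G^{ij}\big(W_2+t(W_1-W_2)\big)\,dt ,
\end{equation*}
with $a=(a^{ij})$ symmetric, bounded measurable, and (after fixing a sign) uniformly positive definite. The restrictions to $\mathbb S^2$ of the coordinate functions $x_1,x_2,x_3$ satisfy $\nabla^2 x_k+x_k g=0$ and hence solve this equation, and $M_2$ is a translate of $M_1$ precisely when $h$ is the restriction of a linear function on $\mathbb R^3$, i.e. precisely when the symmetric $2$-tensor $W:=\nabla^2 h+h\,g$ vanishes identically. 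So it suffices to prove $W\equiv 0$.

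\textbf{Step 2 (Bers--Nirenberg and a topological obstruction).} Assuming, for contradiction, that $W\not\equiv 0$, I would work in an isothermal coordinate $z$ on $\mathbb S^2$ minus a point and let $\Phi$ be the $dz^2$-component of $W$; the equation $a^{ij}W_{ij}=0$ expresses the $g$-trace of $W$ as a pointwise bounded multiple of $|\Phi|$, so $\Phi$ encodes the $g$-trace-free part of $W$ and $W$ vanishes exactly where $\Phi$ does. Feeding this relation into the Codazzi identity for $W=\nabla^2 h+h\,g$ produces a generalized Bers--Vekua (pseudo-analytic) equation for $\Phi$ with bounded measurable coefficients; by the weak unique continuation / representation theorem of Bers--Nirenberg \cite{BN2}, $\Phi$ is either identically zero or has only isolated zeros of finite positive order, hence finitely many on the compact $\mathbb S^2$. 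But $\Phi$ is a pseudo-analytic section of $K_{\mathbb S^2}^{\otimes 2}$, a bundle with no nonzero holomorphic section --- equivalently, the two $a$-orthogonal eigendirections of $W$ give a line field on $\mathbb S^2$ minus the zeros of $\Phi$, with index $-k/2<0$ at a zero of order $k$, contradicting $\sum(\text{indices})=\chi(\mathbb S^2)=2$; and if $\Phi$ vanished nowhere that line field would be globally defined on $\mathbb S^2$, which is likewise impossible. Hence $\Phi\equiv 0$, so $W\equiv 0$, so $h$ is linear and $M_2=M_1+v$.

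\textbf{Where the difficulty lies.} The crux is Step 2, namely producing an honest pseudo-analytic equation for $\Phi$. Since $\Phi$ is assembled from the \emph{second} derivatives of $h$, its equation effectively sits one differentiation above \eqref{1}, whereas the coefficient matrix $a^{ij}$ --- a divided difference of the curvature function of surfaces that are only $C^2$ --- is merely continuous and cannot be differentiated; one therefore has to set up the relevant first order elliptic system carefully at the available regularity (this is routine when $M_i$ is smooth), and this is where the hypotheses genuinely enter, as well as what must be strengthened for the minimal regularity version mentioned in the abstract. A secondary technicality is the behavior at points where the two principal curvatures of $M_1$ or $M_2$ coincide and $G$ is only Lipschitz in the matrix; this is harmless because the Bers--Nirenberg theory needs only bounded measurable principal coefficients.
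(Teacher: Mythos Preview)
Your Step 1 is exactly the paper's reduction: pass to support functions, linearize $G(W_1)=G(W_2)$ along the segment, and arrive at $a^{ij}(\nabla_{ij}h+h\,g_{ij})=0$ with $a^{ij}\in L^\infty$ uniformly elliptic; the task is then to show $h$ is linear. From here on the two arguments diverge.

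Your Step 2 is the Hopf/Hartman--Wintner route: package the traceless part of $W=\nabla^2 h+h\,g$ as a quadratic differential $\Phi$, combine the Codazzi identity with the trace relation to get a Bers--Vekua equation for $\Phi$, and finish with an index count. The paper does \emph{not} do this. Instead it introduces the Weyl function $\rho_u=|X_u|^2$ and the height functions $\phi_E=\langle E,X_u\rangle$, where $X_u=\nabla u+u\,x$; for smooth data these satisfy a differential inequality $F^{ij}(\rho_u)_{ij}\ge -C|\nabla\rho_u|$ (and likewise for $\phi_E$), which via approximation and the strong maximum principle forces the maximum sets $\mathcal M$, $\mathcal M_E$ to have no isolated points. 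Choosing $E=X_u(p_0)/|X_u(p_0)|$ at a maximum $p_0$ of $\rho_u$, one checks that $\nabla\tilde u$ vanishes on the nondiscrete set $\mathcal M_E$, where $\tilde u=u-\rho_u^{1/2}(p_0)\langle E,\cdot\rangle$; since $\nabla\tilde u$ itself solves a first–order Bers--Nirenberg system, unique continuation gives $\nabla\tilde u\equiv 0$.

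The practical difference is exactly the regularity issue you flag. In your scheme the pseudo-analytic object is $\Phi$, built from \emph{second} derivatives of $h$; substituting the trace relation into Codazzi yields, in conformal coordinates, a \emph{divergence-form} system $\partial_{\bar z}\Phi+\partial_z(\mu\Phi+\nu\bar\Phi)=c\Phi+d\bar\Phi$ whose zeroth-order part hides $\partial a^{ij}$, and one must justify the similarity principle/isolated-zeros statement for a solution that is a priori only $C^0$ (or $L^2$ in the $W^{2,2}$ setting). This is precisely why the Hartman--Wintner argument historically needed $C^3$. The paper sidesteps the problem by applying Bers--Nirenberg one derivative lower, to $\nabla\tilde u\in W^{1,2}$: the integrability condition $\partial_1 v_2=\partial_2 v_1$ together with $a^{ij}(\partial_i v_j+\tilde u\,\delta_{ij})=0$ already gives a first-order elliptic system for $v=\nabla\tilde u$ with bounded measurable coefficients and no differentiation of $a^{ij}$, so \cite{BN2} applies directly; the index count is replaced by the maximum-principle/approximation Lemma showing $\{\nabla\tilde u=0\}$ is nondiscrete. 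So your outline is a legitimate alternative, but as written it leaves the hard step---making the pseudo-analytic equation for $\Phi$ rigorous at $C^2$---unproved, whereas the paper's argument is complete at $C^2$ (indeed at $W^{2,2}$).
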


This classical result was first proved for analytical surfaces by Alexandrov in \cite{Alex39}, for $C^4$ surfaces by Pogorelov in \cite{P1}, and Hartman-Wintner \cite{HW} reduced regularity to $C^3$, see also \cite{P2}. Pogorelov \cite{P3, P4} published certain uniqueness results for $C^2$ surfaces, these general results would imply Theorem \ref{alex0} in $C^2$ case. It was pointed out in \cite{Pa} that the proof of Pogorelov is erroneous, it contains an uncorrectable mistake (see page 301-302 in \cite{Pa}). There is a counter-example of Martinez-Maure \cite{Ma} (see also \cite{Pa}) to the main claims in \cite{P3,P4}. The results by Han-Nadirashvili-Yuan \cite{HNY} imply two proofs of Theorem \ref{alex0}, one for $C^2$ surfaces and another for $C^{2,\alpha}$ surfaces. The problem is often reduced to a uniqueness problem for linear elliptic equations in appropriate settings, either on $\mathbb S^2$ or in $\mathbb R^3$, we refer \cite{Alex56, P2}. Here we will concentrate on the corresponding equation on $\mathbb S^2$, as in \cite{GG}. The advantage in this setting is that it is globally defined.

If $M$ is a strictly convex surface with support function $u$, then the principal curvatures at $\nu^{-1}(x)$ are the reciprocals of the principal radii $\lambda_1,\lambda_2$ of $M$,  which are the eigenvalues of spherical Hessian $W_u(x)=(u_{ij}(x)+u(x)\delta_{ij})$ where $u_{ij}$ are the covariant derivatives with respect to any given local orthonormal frame on $\mathbb S^2$. Set
\begin{eqnarray}\label{Ff}
 \tilde{F}(W_u)=: f(\frac1{\lambda_1(W_u)}, \frac1{\lambda_2(W_u)})= f(\kappa_1, \kappa_2).\end{eqnarray}
In view of Lemma 1 in \cite{Alex57}, if $f$ satisfies the conditions in Theorem \ref{alex0}, then
$\tilde{F}^{ij}=\frac{\partial \tilde{F}}{\partial w_{ij}}\in L^{\infty}$ is uniformly elliptic. In the case $n=2$, it can be read off from the explicit formulas
\begin{eqnarray*}
\lambda_1= \frac{\sigma_1(W_u)- \sqrt{\sigma_1(W_u)^2-4\sigma_2(W_u)}}{2}, \quad
\lambda_2=  \frac{\sigma_1(W_u)+ \sqrt{\sigma_1(W_u)^2-4\sigma_2(W_u)}}{2}.
\end{eqnarray*}
As noted by Alexanderov in \cite{Alex57}, $\tilde{F}^{ij}$ in general is not continuous if $f(y_1,y_2)$ is not symmetric (even $f$ is analytic).

We want to address when Theorem \ref{alex0} remains true for convex bodies in $\mathbb R^3$ with weakened regularity assumption. In the Bruun-Minkowski theory, the uniqueness of Alexandrov-Fenchel-Jessen \cite{Alex37, Alex38, FJ} states that, if two bounded convex bodies in $\mathbb R^{n+1}$ have the same $k$th area measures on $\mathbb S^n$, then these two bodies are the same up to a rigidity motion in $\mathbb R^{n+1}$. Though for a general convex body, the principal curvatures of its boundary may not be defined. But one can always define the support function $u$, which is a function on $\mathbb S^2$. By the convexity, then $W_u=(u_{ij}+u\Delta_{ij})$ is a Radon measure on $\mathbb S^2$. Also, by Alexandrov's theorem for the differentiability of convex functions, $W_{u}$ is defined for almost every point $x\in \mathbb S^2$. Denote $\mathcal N$ to be the space of all positive definite $2\times 2$ matrices, and let $F$ be a function defined on $\mathcal N$. For a support function $u$ of a bounded convex body $\Omega_u$, $F(W_u)$ is defined for $a.e.\  x\in \mathbb S^2$. For fixed support functions $u^l$ of $\Omega_{u^l}, l=1,2$, there is $\Omega\subset \mathbb S^2$ with $|\mathbb S^2\setminus \Omega|=0$ such that $W_{u^1},
W_{u^2}$ are pointwise finite in $\Omega$. Set $P_{u^1, u^2}=\{W\in \mathcal{N} | \exists x\in \Omega,  W=W_{u^1}(x), or W=W_{u^2}(x) \}$, let $\mathcal{P}_{u^1, u^2}$ be the convex hull of $P_{u^1, u^2}$ in $\mathcal{N}$.

We establish the following slightly more general version of Theorem \ref{alex0}.
\begin{thm}\label{alexB}
Suppose $\Omega_1$ and $\Omega_2$ are two bounded convex bodies in $\mathbb R^3$. Let $u^l, l=1,2$ be the corresponding supporting functions respectively. Suppose the spherical Hessians $W_{u^l}= (u^l_{ij}+\delta_{ij}u^l)$ (in the weak sense) are two non-singular Radon measures. Let $F: \mathcal N\rightarrow \mathbb R$ be a $C^{0, 1}$ function such that
\[\Lambda I\ge (F^{ij})(W):=(\frac{\partial F}{\partial W_{ij}})(W)\ge \lambda I >0, \quad \forall W\in \mathcal{P_{u^1, u^2}},\]
for some positive constants $\Lambda, \lambda$. If
\begin{equation}\label{F}
F(W_{u^1})= F(W_{u^2}),
\end{equation}
at almost every parallel normal  $ x\in \mathbb S^2$,
then $\Omega_1$ is equal to $\Omega_2$ up to a translation.
\end{thm}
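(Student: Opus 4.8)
The plan is to reduce the theorem to showing that $w:=u^1-u^2$ is the restriction to $\mathbb S^2$ of a linear function on $\mathbb R^3$, equivalently that the ($L^1$, by the non-singularity hypothesis) spherical Hessian $W_w:=W_{u^1}-W_{u^2}$ vanishes identically; since $\langle a,x\rangle$ is precisely the support function of the point $\{a\}$, this means $\Omega_1=\Omega_2+a$. For the reduction, let $\Omega$ be as in the excerpt, so that for $x\in\Omega$ the segment $W_t(x):=(1-t)W_{u^2}(x)+tW_{u^1}(x)$, $t\in[0,1]$, lies in $\mathcal P_{u^1,u^2}$. Since $F$ is differentiable on $\mathcal P_{u^1,u^2}$ with the stated bounds it is Lipschitz there, so the fundamental theorem of calculus (after a harmless mollification of $F$ if needed) gives, at a.e. $x\in\mathbb S^2$,
\[
0=F(W_{u^1}(x))-F(W_{u^2}(x))=\int_0^1\frac{d}{dt}F(W_t(x))\,dt=a^{ij}(x)\,\big(W_w(x)\big)_{ij},\qquad a^{ij}(x):=\int_0^1F^{ij}(W_t(x))\,dt .
\]
Averaging the ellipticity bounds, $(a^{ij})$ is symmetric, $L^\infty$ and uniformly elliptic, $\lambda I\le (a^{ij})\le\Lambda I$, though in general discontinuous. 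Writing $W_w=\nabla^2 w+w\,g$ with $g$ the round metric, we have produced a linear uniformly elliptic second-order equation on $\mathbb S^2$,
\[
a^{ij}\big(w_{ij}+\delta_{ij}w\big)=0\qquad\text{a.e. on }\mathbb S^2 ,
\]
whose solution space contains the $3$-dimensional space of linear functions (for which $W_w=0$), in which no derivative of $a^{ij}$ appears, and with $w\in W^{2,1}(\mathbb S^2)$.

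The core of the proof is then to show that this equation forces $W_w\equiv 0$. The tensor $W_w=\nabla^2 w+wg$ is automatically a Codazzi tensor on the round sphere, while the equation says exactly that $W_w$ is trace-free with respect to the measurable positive cometric $(a^{ij})$. In a local isothermal coordinate $z$ for $g$ (with $g=e^{2\phi}|dz|^2$) encode the trace-free part of $W_w$ by $\beta:=\nabla_{zz}w$, the coefficient of a quadratic differential $Q=\beta\,dz^2$. The Codazzi identity on the curvature-$1$ sphere reads $\partial_{\bar z}\beta=\tfrac14 e^{2\phi}\partial_z(\operatorname{tr}_gW_w)$, while the $(a^{ij})$-trace-free condition, together with positivity of $(a^{ij})$, reads $\operatorname{tr}_gW_w=-4e^{-2\phi}\operatorname{Re}(\rho\beta)$ with $\rho\in L^\infty$ and $\|\rho\|_\infty\le\frac{\Lambda-\lambda}{\Lambda+\lambda}<1$. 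Combining the two, and treating $w,\nabla w,\beta$ as the unknowns, one is led (with some care, see below) to a first-order elliptic system of Bers--Nirenberg type with $L^\infty$ coefficients; thus $Q$ is a generalized analytic quadratic differential in the sense of \cite{BN2}.

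Now the weak unique continuation theorem of \cite{BN2}, together with the similarity principle, yields a dichotomy: either $Q\equiv 0$, or the zeros of $Q$ are isolated of finite order $k\ge 1$, each being a singularity of the horizontal foliation of $Q$ of index $-k/2<0$; and $Q$ cannot be nowhere-vanishing because $\chi(\mathbb S^2)=2\ne 0$. By the Poincar\'e--Hopf theorem these singularity indices sum to $\chi(\mathbb S^2)=2>0$, incompatible with their all being $\le-\tfrac12$. Hence $Q\equiv 0$, i.e. the trace-free part of $W_w$ vanishes a.e.; since $W_w$ is also $(a^{ij})$-trace-free and $\operatorname{tr}_g a\ge 2\lambda>0$, its $g$-trace vanishes a.e.\ as well, so $W_w=0$ a.e.\ on $\mathbb S^2$. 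Therefore $\nabla^2 w+wg=0$ in the distributional sense, whence $\Delta_{\mathbb S^2}w+2w=0$; elliptic regularity makes $w$ smooth and places it in the eigenspace of $-\Delta_{\mathbb S^2}$ for the eigenvalue $2$, which is exactly the span of the restrictions to $\mathbb S^2$ of the linear functions $\langle a,x\rangle$. Thus $u^1=u^2+\langle a,x\rangle$ and $\Omega_1=\Omega_2+a$.

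The main obstacle is the step producing $W_w\equiv 0$. When $(a^{ij})$ is smooth this is the classical fact that a trace-free Codazzi tensor on $\mathbb S^2$ is a holomorphic quadratic differential, hence zero; but here $a^{ij}=\int_0^1 F^{ij}$ is only $L^\infty$ (as Alexandrov observed, $\tilde F^{ij}$ need not be continuous even for analytic $f$) and $w$ is merely $W^{2,1}$, so the holomorphic-function theory must be replaced by Bers--Nirenberg's theory of elliptic systems with discontinuous coefficients. One has to verify that the first-order system above is genuinely uniformly elliptic with constants inherited from $\lambda I\le(a^{ij})\le\Lambda I$, that the weak unique continuation theorem and the finite-order-vanishing (similarity) principle apply in this low-regularity class, and that the index count for the foliation of $Q$ goes through. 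The remaining steps---the mean value derivation of the linear equation and the regularity bootstrap at the end---are routine once the non-singularity of the $W_{u^l}$ is assumed.
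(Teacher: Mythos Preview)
Your proposal has a genuine gap at exactly the point you flag as the main obstacle. From the non-singularity hypothesis you correctly deduce $w=u^1-u^2\in W^{2,1}(\mathbb S^2)$, so your quadratic-differential coefficient $\beta=\nabla_{zz}w$ is a priori only in $L^1$. The similarity principle and weak unique continuation results of \cite{BN2} are stated and proved for solutions with second derivatives in $L^2$; with merely $\beta\in L^1$ neither the local representation $\beta=e^{s}h$ ($h$ holomorphic, $s$ continuous) nor the isolated, finite-order-vanishing conclusion is available, and your first-order system even carries $\partial_z(\rho\beta)$ on the right-hand side, so it is not a Beltrami equation to which \cite{BN2} applies directly. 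You write that ``one has to verify'' this, but you do not, and the point is not routine: it is precisely the new content of the paper's proof of Theorem~\ref{alexB} over and above Theorem~\ref{alex}. The paper closes the gap by first proving $w\in W^{2,2}$: Lemma~\ref{det} gives the pointwise estimate $|W_w|^2\le -\tfrac{2\Lambda}{\lambda}\det W_w$ wherever the linearized equation holds, the convexity of $u^1,u^2$ yields $-\det W_w\le \det W_{u^1+u^2}$, and $\int_{\mathbb S^2}\det W_{u^1+u^2}$ is bounded by mixed volumes via smooth approximation and Fatou's lemma. Only after this $W^{2,1}\!\to\! W^{2,2}$ upgrade is Theorem~\ref{alex} (and with it the Bers--Nirenberg machinery) invoked.

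Once $w\in W^{2,2}$ is in hand, your Hopf-type argument---encoding the $a^{ij}$-trace-free Codazzi tensor $W_w$ as a pseudo-analytic quadratic differential and ruling out a nonzero $Q$ on $\mathbb S^2$ by the Poincar\'e--Hopf index sum for its horizontal line field---is a legitimate and genuinely different route to Theorem~\ref{alex} from the paper's. The paper instead introduces the Weyl function $\rho_u=|X_u|^2$ and the height functions $\phi_E$, proves a differential inequality for them when the coefficients are smooth (Lemma~\ref{lem1}), shows by approximation and the strong maximum principle that their maximum sets have no isolated points (Lemma~\ref{noisolate}), and then applies weak unique continuation to $\nabla\tilde u$. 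Both approaches rest on \cite{BN2} but use it differently: yours via the similarity principle and an index count, the paper's via unique continuation combined with a maximum-principle argument. For Theorem~\ref{alexB} itself, however, the step you are missing---and which you cannot bypass---is the determinant/mixed-volume estimate that lifts the regularity of $w$ from $W^{2,1}$ to $W^{2,2}$.
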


\medskip

Suppose $u^1, u^2$ are the support functions of two convex bodies $\Omega_1,\Omega_2$ respectively, and suppose $W_{u^^l}, l=1,2$ are defined and they satisfy equation (\ref{F}) at some point $x\in \mathbb S^2$. Then, for $u=u^1-u^2$, $W_u(x)$ satisfies equation
\begin{equation}\label{Fu}
 F^{ij}(x) (W_u(x))=0, \end{equation}
with $F^{ij}(x)=\int_0^1 \frac{\partial F}{\partial W_{ij}}(tW_{u^1}(x)+(1-t)W_{u^2}(x))dt$.
By the convexity, $W_{u^l}, l=1,2$ exist almost everywhere on $\mathbb S^2$. If they satisfy equation (\ref{F}) almost everywhere, equation (\ref{Fu}) is verified almost everywhere. Note that $u$ may {\it not be a solution (even in a weak sense) of partial differential equation (\ref{Fu})}. The classical elliptic theory (e.g., \cite{M, N2, BN2}) requires $u\in W^{2,2}$ in order to make sense of $u$ as a weak solution of (\ref{Fu}). A main step in the proof of Theorem \ref{alexB} is to show that with the assumptions in the theorem, $u=u^1-u^2$ is indeed in $W^{2,2}(\mathbb S^2)$. The proof will appear in the last part of the paper.

\medskip

Let's now focus on $W^{2,2}$ solutions of differential equation (\ref{Fu}), with general uniformly elliptic condition on tensor $F^{ij}$ on $\mathbb S^2$:
\begin{equation}\label{elliptic}
\lambda |\xi|^2\le F^{ij}(x)\xi_i\xi_j\le \Lambda |\xi|^2, \ \forall x\in \mathbb S^2, \xi \in \mathbb R^2,
\end{equation}
for some positive numbers $\lambda, \Lambda$.
The aforementioned proofs of Theorem \ref{alex0} (\cite{P1, HW, P2, HNY}) all reduce to the statement that any solution of (\ref{elliptic}) is a linear function, under various regularity assumptions on $F^{ij}$ and $u$. Equation (\ref{Fu}) is also related to minimal cone equation in $\mathbb R^3$ (\cite{HNY}). The following result was proved in \cite{HNY}.

\begin{thm}[Theorem 1.1 in \cite{HNY}]\label{alex}
Suppose $F^{ij}(x)\in L^{\infty}(\mathbb S^2)$ satisfies (\ref{elliptic}), suppose $u\in W^{2,2}(\mathbb S^2)$ is a solution of
(\ref{Fu}). Then, $u(x)=a_1x_1+a_2x_2+a_3x_3$ for some $a_i\in \mathbb R$. \end{thm}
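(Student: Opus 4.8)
The plan is to reduce the statement to the vanishing of the trace-free Hessian $H:=\nabla^2 u-\tfrac12(\Delta u)\,g$ of $u$, to encode $H$ as a quadratic differential on $\mathbb S^2$ obeying a first order equation of Bers-Nirenberg type, to deduce from the weak unique continuation theorem of \cite{BN2} that this quadratic differential has at most isolated zeros of finite order, and finally to kill it by a Poincar\'e-Hopf count on $\mathbb S^2$.

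\emph{Reduction.} Since $W_{x_i}=\nabla^2 x_i+x_i\,g\equiv 0$ for $i=1,2,3$, the coordinate functions $x_1,x_2,x_3$ solve (\ref{Fu}); hence it suffices to show that every $u\in W^{2,2}(\mathbb S^2)$ solving (\ref{Fu}) has $H\equiv 0$. Given this, $\nabla^2 u=\tfrac12(\Delta u)\,g$; taking a divergence and using $\operatorname{Ric}_{\mathbb S^2}=g$ yields $\nabla(\Delta u+2u)=0$, so $\Delta u+2u\equiv c$ for a constant $c$. Then $W_u=H+\tfrac12(\Delta u+2u)\,g=\tfrac c2\,g$, and (\ref{Fu}) forces $c\cdot F^{ij}\delta_{ij}=0$, hence $c=0$; thus $\Delta u+2u=0$ and $u$ lies in the first nonzero eigenspace of $-\Delta_{\mathbb S^2}$, i.e. $u=a_1x_1+a_2x_2+a_3x_3$. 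Note that $H$ depends only on $\nabla^2 u$, not on the value of $u$, so the argument below is unaffected by (\ref{Fu}) not being invariant under adding constants to $u$.

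\emph{The equation for the associated quadratic differential.} Fix a local conformal coordinate $z$ for the round metric, $g=e^{2\rho}|dz|^2$, and set $\phi:=H_{zz}=(W_u)_{zz}=u_{zz}-2\rho_z u_z$, so that $\phi\,dz^2$ is the globally defined quadratic differential on $\mathbb S^2$ representing $H$, and $\phi\in L^p_{\mathrm{loc}}$ once $u\in W^{2,p}_{\mathrm{loc}}$. Since $W_u=\nabla^2 u+u\,g$ is a Codazzi tensor on the space form $\mathbb S^2$ for every $u$, one has the identity
\[
\partial_{\bar z}\phi=\partial_z\big((W_u)_{z\bar z}\big)-2\rho_z\,(W_u)_{z\bar z},
\]
while in these coordinates (\ref{Fu}) reads $2\operatorname{Re}\!\big(F^{zz}\phi\big)+2F^{z\bar z}(W_u)_{z\bar z}=0$, whence $(W_u)_{z\bar z}=\mu\,\phi+\bar\mu\,\bar\phi$ with $\mu:=-F^{zz}/(2F^{z\bar z})\in L^\infty$; moreover the uniform ellipticity (\ref{elliptic}) is precisely the bound $|\mu|\le k<1$. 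Substituting, $\phi$ satisfies the first order equation
\[
\partial_{\bar z}\phi=\partial_z\big(\mu\,\phi+\bar\mu\,\bar\phi\big)+a\,\phi+b\,\bar\phi,\qquad |\mu|\le k<1,\quad a,b\in L^\infty,
\]
in which the merely bounded tensor $F^{ij}$ is never differentiated by itself---it enters only through $\mu$, always under a $\partial_z$ that simultaneously falls on $\phi$. By the Bers-Nirenberg similarity principle for such plane elliptic systems with bounded measurable coefficients, together with the weak unique continuation theorem of \cite{BN2}, $\phi$ is a generalized analytic function: either $\phi\equiv 0$, or $\phi$ has only isolated zeros, each of some finite order $k_j\ge 1$.

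\emph{Poincar\'e-Hopf, and the expected obstacle.} Suppose $\phi\not\equiv 0$. Off its isolated zeros the trace-free symmetric $2$-tensor $H$ determines a line field on $\mathbb S^2$, namely its pair of perpendicular eigendirections, with a singularity of index $-k_j/2<0$ at each zero of $\phi$ (as for any quadratic differential). By the Poincar\'e-Hopf theorem for line fields on a closed surface these indices must sum to $\chi(\mathbb S^2)=2$, contradicting $\sum_j(-k_j/2)\le 0$. Hence $\phi\equiv 0$, i.e. $H\equiv 0$, and the Reduction step concludes the proof. I expect the real work to be in justifying the generalized-analytic step under the sole hypothesis $F^{ij}\in L^\infty$: one first raises the integrability of $\phi$ to some $p>2$ (the two-dimensional $L^p$ theory for non-divergence uniformly elliptic operators, valid for $p$ in a neighbourhood of $2$), and then interprets and solves the displayed equation for $\phi$ in the weak sense, checking that the representation $\phi=e^{s}h$ with $s$ continuous and $h$ holomorphic---and hence the isolated-zero property---survives. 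This is precisely the low-regularity plane regime for which \cite{BN2} was designed; granting it, the remaining steps are routine.
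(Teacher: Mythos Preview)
Your route is genuinely different from the paper's, and it is worth comparing the two. The paper does not work with the Hopf quadratic differential at all. Instead it introduces Weyl's functions $\rho_u=|X_u|^2$ and $\phi_E=\langle E,X_u\rangle$ with $X_u=\nabla u+u\,\vec x$, proves (by approximating $F^{ij}$ with smooth tensors and solving a Dirichlet problem) that the maximum sets of $\rho_u$ and $\phi_E$ contain no isolated points, chooses $E=X_u(p_0)/|X_u(p_0)|$ at a maximum point $p_0$ of $\rho_u$, and shows that the \emph{gradient} of $\tilde u=u-\rho_u^{1/2}(p_0)\,E\cdot x$ vanishes on the non-isolated set $\mathcal M_E$. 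It then invokes the Bers--Nirenberg unique continuation theorem for $\nabla\tilde u$: Bers--Nirenberg explicitly note that if $\tilde u$ satisfies a second-order elliptic equation with bounded measurable coefficients, then $\nabla\tilde u$ satisfies a first-order elliptic system of exactly the type their representation theorem covers, so zeros of $\nabla\tilde u$ are isolated unless $\nabla\tilde u\equiv0$. That forces $\tilde u\equiv0$. In short: the paper stays at the level of first derivatives, and the appeal to \cite{BN2} is to precisely the statement proved there.

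Your argument is a Hopf-differential/index argument at the level of second derivatives. The reduction step and the Poincar\'e--Hopf count are fine. The point you should examine more carefully is the one you already flag: the first-order equation you obtain for $\phi=(W_u)_{zz}$ is not the Carleman/Beltrami form $\partial_{\bar z}\phi=\mu\,\partial_z\phi+\nu\,\overline{\partial_z\phi}+a\phi+b\bar\phi$ that the Bers--Nirenberg similarity principle treats directly; it is the \emph{divergence-form} equation $\partial_{\bar z}\phi-\partial_z(\mu\phi+\bar\mu\bar\phi)=a\phi+b\bar\phi$, with $\mu\in L^\infty$ undifferentiated only in the distributional sense. Writing it in real $2\times2$ form one gets a first-order system with coefficients in $L^\infty$ whose principal part is in divergence form; the isolated-zero/similarity conclusion for such systems is not literally the statement in \cite{BN2} for gradients of non-divergence solutions, and you would need to supply or cite the appropriate variant (or, alternatively, rewrite $\phi$ as a derivative of a function satisfying a second-order equation so that \cite{BN2} applies verbatim). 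The paper's approach avoids this issue entirely by working with $\nabla\tilde u$, for which the Bers--Nirenberg machinery is tailor-made. What your approach buys, if the technical step is secured, is a cleaner global mechanism (the index obstruction on $\mathbb S^2$) in place of the maximum-principle/approximation argument.
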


There the original statement in \cite{HNY} is for $1$-homogeneous $W^{2,2}_{loc}(\mathbb R^3)$ solution $v$ of equation
\begin{equation}\label{alex1} \sum_{i,j=1}^3a^{ij}(X)v_{ij}(X)=0.\end{equation}
These two statements are equivalent. To see this, set $u(x)=\frac{v(X)}{|X|}$ with $x=\frac{X}{|X|}$. By the homogeneity assumption, the radial direction corresponds to null eigenvalue of $\nabla^2 v$, the other two eigenvalues coincide the eigenvaules of the spherical Hessian of $W=(u_{ij}+u\delta_{ij})$. $v(X)\in W^{2,2}_{loc}(\mathbb R^3)$ is a solution to (\ref{alex1}) if and only if $u\in  W^{2,2}(\mathbb S^2)$ is a solution to (\ref{Fu}) with $F^{ij}(x)=\langle e_i, A e_j\rangle $, where $A=(a^{ij}(\frac{X}{|X|}))$ and $(e_1,e_2)$ is any orthonormal frame on $\mathbb S^2$.

The proof in \cite{HNY} uses gradient maps and support planes introduced by Alexandrov, as in \cite{Alex39, P1, P2}. We give a different proof of Theorem \ref{alex} using the maximum principle for smooth solutions and the unique continuation theorem of Bers-Nirenberg \cite{BN2}, working purely on solutions of equation (\ref{Fu}) on $\mathbb S^2$.

\medskip

Note that $F$ in Theorem \ref{alexB} (and Theorem \ref{alex0}) is not assumed to be symmetric. The weak assumption $F^{ij}\in L^{\infty}$ is needed to deal with this case.
This assumption also fits well with the weak unique continuation theorem of Bers-Nirenberg. This beautiful result of Bers-Nirenberg will be used in a crucial way in our proof. If $u\in W^{2,2}(\mathbb S^2)$, $u\in C^{\alpha}(\mathbb S^2)$ for some $0<\alpha<1$ by the Sobolev embedding theorem. Equation (\ref{Fu}) and $C^{1,\alpha}$ estimates for $2$-d linear elliptic PDE (e.g., \cite{M, N2, BN2}) imply that $u$ is in $C^{1,\alpha}(\mathbb S^2)$ for some $\alpha>0$ depending only on $\|u\|_{C^0}$ and the ellipticity constants of $F^{ij}$. This fact will be assumed in the rest of the paper.

The following lemma is elementary.

\begin{lem}\label{det}
Suppose $F^{ij}\in L^{\infty}(\mathbb S^2)$ satisfies (\ref{elliptic}), suppose at some point $x\in \mathbb S^2$, $W_u(x)=(u_{ij}(x)+u(x)\delta_{ij})$ satisfies (\ref{Fu}). Then,
\[
|W_u|^2(x) \leq -\frac{2\Lambda}{\lambda}\det W_u(x).
\]
\end{lem}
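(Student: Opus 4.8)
The statement is a pointwise algebraic fact about a symmetric $2\times 2$ matrix $W = W_u(x)$ subject to the single linear relation $F^{ij}W_{ij} = 0$ with $(F^{ij})$ uniformly elliptic. Write $W = \begin{pmatrix} a & b \\ b & c\end{pmatrix}$. Then $|W|^2 = a^2 + 2b^2 + c^2$ and $\det W = ac - b^2$, so the claimed inequality is equivalent to
\[
a^2 + 2b^2 + c^2 \le \frac{2\Lambda}{\lambda}(b^2 - ac),
\]
which already forces $\det W \le 0$ (the right side must be nonnegative). So the first thing I would do is observe that the constraint $F^{ij}W_{ij}=0$ expresses a definite relation between the entries; writing $(F^{ij}) = \begin{pmatrix} \alpha & \beta \\ \beta & \gamma\end{pmatrix}$ (note $F^{12}=F^{21}$ may be assumed by symmetrizing against the symmetric $W$), ellipticity gives $\alpha,\gamma \ge \lambda$, $\alpha + \gamma \le 2\Lambda$ (from $\operatorname{tr}$), and $\alpha\gamma - \beta^2 \ge \lambda^2$ — equivalently $\lambda \le$ eigenvalues of $(F^{ij})\le \Lambda$. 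The relation reads $\alpha a + 2\beta b + \gamma c = 0$.

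Next I would diagonalize. Since the inequality to be proved and the hypothesis are both invariant under simultaneously rotating $W$ and $(F^{ij})$ by the same orthogonal change of frame (both $|W|^2$, $\det W$, and the ellipticity bounds are rotation-invariant, and $F^{ij}W_{ij}$ is a scalar), I can choose the local orthonormal frame that diagonalizes $W$, so $b = 0$ and $W = \operatorname{diag}(\lambda_1, \lambda_2)$ with, say, $\lambda_1 \le \lambda_2$ (here I'm reusing the eigenvalue notation — the principal radii). In this frame the constraint becomes $F^{11}\lambda_1 + F^{22}\lambda_2 = 0$ (the off-diagonal term of $F^{ij}$ drops out since $W$ is diagonal). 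Because $F^{11}, F^{22} \in [\lambda, \Lambda]$ are both positive, $\lambda_1$ and $\lambda_2$ have opposite signs (or are both zero), say $\lambda_1 \le 0 \le \lambda_2$, which is exactly $\det W = \lambda_1\lambda_2 \le 0$. Now $|W|^2 = \lambda_1^2 + \lambda_2^2$ and $-\det W = -\lambda_1\lambda_2 = |\lambda_1|\,|\lambda_2|$, and from the constraint $|\lambda_1|/|\lambda_2| = F^{22}/F^{11} \le \Lambda/\lambda$ and symmetrically $|\lambda_2|/|\lambda_1| \le \Lambda/\lambda$. Hence
\[
\frac{|W|^2}{-\det W} = \frac{\lambda_1^2 + \lambda_2^2}{|\lambda_1||\lambda_2|} = \frac{|\lambda_1|}{|\lambda_2|} + \frac{|\lambda_2|}{|\lambda_1|} \le \frac{\Lambda}{\lambda} + \frac{\Lambda}{\lambda} = \frac{2\Lambda}{\lambda},
\]
which is precisely the desired bound. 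One edge case, $\lambda_1 = \lambda_2 = 0$, makes both sides zero and is trivial.

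The proof is genuinely elementary, as the lemma claims; there is no real obstacle, only bookkeeping. The one point that deserves a sentence of care is the reduction to a frame diagonalizing $W$: I should note that although $(F^{ij})$ is only $L^\infty$ (not continuous), the inequality is purely pointwise at a fixed $x$, so I am free to pick the orthonormal frame at that single point to diagonalize $W(x)$, and the value $F^{ij}(x)W_{ij}(x)$ is frame-independent. After that the estimate is just the AM–GM-type bound $t + t^{-1} \le 2\Lambda/\lambda$ for $t = |\lambda_1|/|\lambda_2| \in [\lambda/\Lambda, \Lambda/\lambda]$, using that $s \mapsto s + s^{-1}$ is increasing on $[1,\infty)$ and decreasing on $(0,1]$ with the extreme value attained at the endpoints of the allowed interval.
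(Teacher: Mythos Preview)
Your argument is correct. Diagonalizing $W$ at the given point is legitimate since everything in sight (the Frobenius norm, the determinant, the ellipticity bounds, and the scalar $F^{ij}W_{ij}$) is invariant under orthogonal change of frame; the diagonal entries $F^{11},F^{22}$ of the transformed coefficient matrix still lie in $[\lambda,\Lambda]$ because they are values of the quadratic form $F^{ij}\xi_i\xi_j$ on unit vectors. The ratio bound $t+t^{-1}\le 2\Lambda/\lambda$ then finishes it, and the trivial case $W=0$ is handled.

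The paper's proof is a different but equally short computation that does \emph{not} diagonalize. It works in an arbitrary frame: solve the constraint for $W_{22}$, substitute into $\det W$, and recognize
\[
\det W_u=-\frac{1}{F^{22}}\bigl(F^{11}W_{11}^2+2F^{12}W_{11}W_{12}+F^{22}W_{12}^2\bigr)\le -\frac{\lambda}{\Lambda}\bigl(W_{11}^2+W_{12}^2\bigr),
\]
using that the bracket is the elliptic quadratic form $F^{ij}\xi_i\xi_j$ at $\xi=(W_{11},W_{12})$; the symmetric inequality with $(W_{22},W_{21})$ and addition give the result. Your diagonalization makes the sign $\det W\le 0$ and the mechanism (a bound on the eigenvalue ratio) more transparent, and incidentally yields the slightly sharper constant $\Lambda/\lambda+\lambda/\Lambda$; the paper's version has the virtue of being frame-free and of displaying the quadratic-form structure explicitly, which is closer in spirit to how the ellipticity hypothesis is stated.
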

\begin{proof}
At $x$, by equation (\ref{Fu}),
\begin{eqnarray}\label{detW}
\det W_u =  -\frac{1}{F^{22}} \Big( F^{11} W_{11}^2 + 2F^{12} W_{11}W_{12} + F^{22}W_{12}^2\Big)\le -\frac{\lambda}{\Lambda} \big(W_{11}^2+ W_{12}^2 \big),
\end{eqnarray}
and similarly, $\det W_u  \leq -\frac{\lambda}{\Lambda} \big(W_{22}^2+ W_{21}^2 \big)$. Thus,
\begin{eqnarray}\label{W22}
 \big(W_{11}^2+ W_{12}^2 + W_{21}^2 +W_{22}^2\big)\leq  -\frac{2\Lambda}{\lambda} \det W_u.
\end{eqnarray}
\end{proof}

\medskip

For each $u\in C^{1}(\mathbb S^2)$, set
$X_u= \sum_i u_i e_{i} +u e_{n+1}$.
For any unit vector $E$ in $\mathbb R^3$, define
\begin{eqnarray}\label{phie}
\phi_E(x)=\langle E, X_u(x)\rangle, \quad  \mbox{and} \quad \rho_u(x)= | X_u(x)|^2,
\end{eqnarray}
where $\langle , \rangle $ is the standard inner product in $\mathbb R^3$. The function $\rho$  was introduced by Weyl in his study of Weyl's problem \cite{W}. It played important role in Nirenberg's solution of the Weyl's problem in \cite{N1}. Our basic observation is that there is a maximum principle for $\rho_u$ and $\phi_E$.

\begin{lem}\label{lem1}   Suppose $U\subset \mathbb S^2$ is an open set, $F^{ij}\in C^1(U)$ is a tensor in $U$ and $u\in C^3(U)$ satisfies equation (\ref{Fu}), then there are two constants $C_1,C_2$ depending only on the $C^1$-norm of $F^{ij}$ such that
\begin{eqnarray}\label{ineq1}
F^{ij}(\rho_u)_{ij}\geq -C_1|\nabla \rho_u| ,
\quad
F^{ij}(\phi_E)_{ij}\geq - C_2|\nabla \phi_E| \quad \mbox{in $U$}.
\end{eqnarray}
\end{lem}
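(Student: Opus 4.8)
The idea is to differentiate $\rho_u = |X_u|^2$ and $\phi_E = \langle E, X_u\rangle$ twice, contract with $F^{ij}$, and use equation (\ref{Fu}) together with the structure equations of $\mathbb S^2$ to see the "bad" second-derivative terms cancel. Let me sketch the computation. Fix a local orthonormal frame $\{e_i\}$ on $\mathbb S^2$ with the position vector $x$ as the outward normal, so that $x_i = e_i$ and $(e_i)_j = -x\,\delta_{ij}$ (the second fundamental form of $\mathbb S^2$ is the identity). Then $X_u = \sum_i u_i e_i + u\,x$, and a direct differentiation gives $(X_u)_j = \sum_i (u_{ij} + u\,\delta_{ij})\,e_i = \sum_i (W_u)_{ij} e_i$. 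Differentiating once more and using the Ricci identity on $\mathbb S^2$ (which has constant curvature $1$), one gets $(X_u)_{jk} = \sum_i \big( (W_u)_{ij,k}\big) e_i - (W_u)_{jk}\, x$, where $(W_u)_{ij,k}$ denotes the covariant derivative of the tensor $W_u$. Since $W_u$ is a Codazzi-type tensor on $\mathbb S^2$ (its entries are $u_{ij}+u\delta_{ij}$, whose covariant derivatives are totally symmetric in all three indices, again by the Ricci identity), $(W_u)_{ij,k}$ is fully symmetric in $i,j,k$.

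Now compute. For $\phi_E$: $(\phi_E)_j = \langle E, (X_u)_j\rangle = \sum_i (W_u)_{ij}\langle E, e_i\rangle$, and
\[
(\phi_E)_{jk} = \sum_i (W_u)_{ij,k}\,\langle E, e_i\rangle - (W_u)_{jk}\,\langle E, x\rangle.
\]
Contracting with $F^{jk}$: the second term contributes $-\langle E,x\rangle\, F^{jk}(W_u)_{jk} = 0$ by (\ref{Fu}). The first term is $\sum_{i} \langle E, e_i\rangle\, F^{jk}(W_u)_{ij,k}$. This is where differentiated coefficients appear: we do not know $F^{jk}$ is even continuous a priori, but here we are in the regime $F^{ij}\in C^1(U)$, $u\in C^3(U)$, so we may write $F^{jk}(W_u)_{ij,k} = \big(F^{jk}(W_u)_{ij}\big)_k - (F^{jk})_k (W_u)_{ij}$. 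Using (\ref{Fu}) again, $F^{jk}(W_u)_{ij} = -F^{1k}(W_u)_{i1}\cdot(\text{...})$ — more cleanly, the quantity $\sum_j F^{jk}(W_u)_{ij}$ need not vanish, so one keeps it and estimates: the leading term $\big(\sum_j F^{jk}(W_u)_{ij}\big)_k$ combined with $(\phi_E)_k = \sum_i (W_u)_{ik}\langle E,e_i\rangle$ produces exactly a multiple of $\nabla\phi_E$ after using the symmetry of $(W_u)_{ij,k}$, while the term $-(F^{jk})_k(W_u)_{ij}\langle E,e_i\rangle$ is bounded by $\|F\|_{C^1}\,|W_u|\,|E|$, and $|W_u|$ is controlled by $|\nabla\phi_E|$-type quantities via the trace relation. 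The upshot is $F^{ij}(\phi_E)_{ij} \ge -C_2|\nabla\phi_E|$, with $C_2$ depending only on $\|F^{ij}\|_{C^1}$. The computation for $\rho_u = \sum_E \phi_{E_\alpha}^2$ over an orthonormal basis $\{E_\alpha\}$ of $\mathbb R^3$ follows by Leibniz: $F^{ij}(\rho_u)_{ij} = 2\sum_\alpha \phi_{E_\alpha} F^{ij}(\phi_{E_\alpha})_{ij} + 2\sum_\alpha F^{ij}(\phi_{E_\alpha})_i(\phi_{E_\alpha})_j$. The second sum is $\ge 0$ by ellipticity (\ref{elliptic}), and the first is $\ge -2C_2\sum_\alpha |\phi_{E_\alpha}||\nabla\phi_{E_\alpha}| \ge -C_1|\nabla\rho_u|$ after Cauchy–Schwarz, since $|\nabla\rho_u| = 2|\sum_\alpha \phi_{E_\alpha}\nabla\phi_{E_\alpha}|$ — actually one should be slightly careful here and instead bound $\sum_\alpha|\phi_{E_\alpha}||\nabla\phi_{E_\alpha}|$ directly by $C\,\rho_u^{1/2}(\sum|\nabla\phi_{E_\alpha}|^2)^{1/2}$ and relate $\sum|\nabla\phi_{E_\alpha}|^2$ to $|W_u|^2$ and hence, via the trace of $W_u$ and Lemma \ref{det}, back to $\rho_u$ and $|\nabla\rho_u|$; the cleanest route is to observe $\nabla\rho_u$ itself encodes $W_u$ through $(X_u)_j = \sum_i(W_u)_{ij}e_i$.

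The main obstacle is bookkeeping the terms involving $\nabla F^{ij}$ and the covariant derivatives $(W_u)_{ij,k}$ correctly, and checking that every term that is \emph{not} manifestly nonnegative or manifestly a multiple of $|\nabla\phi_E|$ (resp. $|\nabla\rho_u|$) can in fact be absorbed into the gradient term using only the $C^1$-norm of $F$ and no derivative of $u$ beyond what the gradient of $\phi_E$, $\rho_u$ already sees. The key structural facts that make this work — and which I would state as a preliminary computation — are: (a) $(X_u)_j = \sum_i (W_u)_{ij} e_i$, so $|\nabla\phi_E|$ and $|\nabla\rho_u|$ control $W_u$ itself; (b) the full symmetry of $(W_u)_{ij,k}$ on the constant-curvature sphere; and (c) equation (\ref{Fu}) kills precisely the normal component $-(W_u)_{jk}\langle E,x\rangle$ (resp. its analogue for $\rho_u$) upon contraction. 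Once these three are in hand the inequalities (\ref{ineq1}) drop out, with $C_1, C_2$ expressed explicitly in terms of $\Lambda$, $\lambda$, and $\|F^{ij}\|_{C^1(U)}$.
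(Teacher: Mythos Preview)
Your setup is right: $(X_u)_j=\sum_i(W_u)_{ij}e_i$, $(X_u)_{jk}=\sum_i(W_u)_{ij,k}e_i-(W_u)_{jk}x$, and the normal term dies against $F^{jk}$ by (\ref{Fu}). But two points in the execution are off.

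First, the use of Codazzi is muddled. You write $F^{jk}(W_u)_{ij,k}=(F^{jk}(W_u)_{ij})_{,k}-(F^{jk})_{,k}(W_u)_{ij}$ and then claim the divergence term ``produces a multiple of $\nabla\phi_E$''. It does not: $(F^{jk}(W_u)_{ij})_{,k}$ is genuinely third order in $u$ and does not collapse. The correct move is to use full symmetry to swap indices first, $(W_u)_{ij,k}=(W_u)_{jk,i}$, so that $F^{jk}(W_u)_{ij,k}=F^{jk}(W_u)_{jk,i}=(F^{jk}(W_u)_{jk})_{,i}-(F^{jk})_{,i}(W_u)_{jk}=-(F^{jk})_{,i}(W_u)_{jk}$, the last step by (\ref{Fu}). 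This gives cleanly
\[
F^{jk}(\phi_E)_{jk}=-\langle E,e_i\rangle\,(F^{jk})_{,i}(W_u)_{jk},\qquad
\tfrac12 F^{jk}(\rho_u)_{jk}=-u_i\,(F^{jk})_{,i}(W_u)_{jk}+F^{jk}(W_u)_{jl}(W_u)_{lk},
\]
the quadratic term being $\ge 0$ by ellipticity.

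Second, and this is the real gap: your key structural claim (a), that $|\nabla\phi_E|$ and $|\nabla\rho_u|$ ``control $W_u$ itself'', is false. One has $\nabla\phi_E=W_u\cdot\langle E,e\rangle$ and $\nabla\rho_u=2W_u\cdot\nabla u$; these are images of specific vectors under $W_u$ and carry no lower bound on $|W_u|$. So the remaining term $-\langle E,e_i\rangle(F^{jk})_{,i}(W_u)_{jk}$ is bounded by $\|F\|_{C^1}|W_u|$, and you have no way to dominate $|W_u|$ by $|\nabla\phi_E|$. The idea you are missing (and which the paper uses) is to go the other direction: at points where $\det W_u\neq 0$, invert $W_u$ to write $\langle E,e_k\rangle=(W_u^{-1})_{kl}(\phi_E)_l$ and $u_k=\tfrac12(W_u^{-1})_{kl}(\rho_u)_l$. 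Then the dangerous term becomes $(\phi_E)_l\cdot\dfrac{A^{kl}(F^{ij})_{,k}(W_u)_{ij}}{\det W_u}$, and Lemma~\ref{det} gives $|W_u|^2\le -\tfrac{2\Lambda}{\lambda}\det W_u$, hence $\dfrac{|A^{kl}||(W_u)_{ij}|}{|\det W_u|}\le \dfrac{|W_u|^2}{|\det W_u|}\le \dfrac{2\Lambda}{\lambda}$ is bounded. At degenerate points the same Lemma forces $W_u=0$, so both sides of (\ref{ineq1}) vanish there. This inversion-plus-Lemma~\ref{det} step is what closes the estimate with constants depending only on $\|F^{ij}\|_{C^1}$ and the ellipticity ratio; your sketch never reaches it, and the alternative route for $\rho_u$ via $\rho_u=\sum_\alpha\phi_{E_\alpha}^2$ runs into the same problem, since after Cauchy--Schwarz you are left with $\rho_u^{1/2}|W_u|$, not $|\nabla\rho_u|$.
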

\begin{proof}
Pick any orthonormal frame $e_1,e_2$,  we have
\begin{eqnarray}
(X_u)_i =W_{ij}e_j, \quad
(X_u)_{ij}=W_{ijk}e_k-W_{ij}\vec{x}.
\end{eqnarray}
By Codazzi property of $W$ and (\ref{Fu}),
\begin{eqnarray}\label{3}
\frac{1}{2} F^{ij}(\rho_u)_{ij}= \langle X_u,F^{ij}W_{ijk}e_k \rangle+F^{ij}W_{ik}W_{kj}
= -u_kF^{ij}_{,k}W_{ij}+F^{ij}W_{ik}W_{kj}\nonumber.
\end{eqnarray}

On the other hand, $\nabla \rho_u = 2W\cdot (\nabla u) $.
At the non-degenerate points (i.e., $\det W\neq 0$),
$\nabla u =\frac{1}{2} W^{-1}\cdot \nabla \rho_u$,
where $W^{-1}$ denotes the inverse matrix of $W$. Now,
\begin{eqnarray}\label{4}
2u_kF^{ij}_{,k}W_{ij}=W^{kl}(\rho_u)_lF^{ij}_{,k}W_{ij}=(\rho_u)_l F^{ij}_{,k}\frac{A^{kl}W_{ij}}{\det W}.
\end{eqnarray}
where $A^{kl}$ denote the co-factor of $W_{kl}$.

The first inequality in (\ref{ineq1}) follows (\ref{W22}) and (\ref{4}).

The proof for $\phi_E$ follows the same argument and the following facts:
\begin{eqnarray*}
F^{ij}(\phi_E)_{ij} = - \langle E, e_k \rangle F^{ij}_{,k} W_{ij}, \quad \nabla \phi_E = W\cdot \langle E, e_k \rangle.
\end{eqnarray*}
\end{proof}

Lemma \ref{lem1} yields immediately Theorem \ref{alex0} in $C^3$ case, which corresponds to the Hartman-Wintner theorem (\cite{HW}).

\begin{cor}\label{C3}
Suppose $f\in C^{2}$ and symmetric, $M_1, M_2$ are two closed convex $C^3$ surfaces satisfy conditions in Theorem \ref{alex0} , then the surfaces are the same up to a translation.
\end{cor}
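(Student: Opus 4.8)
The plan is to reduce to the $C^3$ case of Theorem \ref{alex0} via the maximum principle established in Lemma \ref{lem1}, applied to $u = u^1 - u^2$ where $u^l$ are the support functions of $M_1, M_2$. First I would record that since $M_1, M_2$ are closed convex $C^3$ surfaces, their support functions $u^1, u^2$ are $C^3$ on $\mathbb S^2$ and the spherical Hessians $W_{u^l}$ are continuous and positive definite. Because $f$ is $C^2$ and symmetric, $\tilde F$ (defined in (\ref{Ff})) is a $C^2$ function of the entries of $W$ with $(\tilde F^{ij})$ continuous and (by Lemma 1 in \cite{Alex57} and the condition $\partial_{y_1} f \, \partial_{y_2} f > 0$) uniformly elliptic on the relevant range of positive definite matrices. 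The difference $u = u^1 - u^2$ then satisfies equation (\ref{Fu}) with $F^{ij}(x) = \int_0^1 \frac{\partial \tilde F}{\partial W_{ij}}(tW_{u^1}(x) + (1-t)W_{u^2}(x))\,dt$, and since the integrand is continuous and the path of matrices stays in a compact set of positive definite matrices where $\tilde F \in C^1$, the coefficients $F^{ij}$ are continuous on $\mathbb S^2$ and satisfy (\ref{elliptic}) for suitable $\lambda, \Lambda$. In the $C^3$ category $u \in C^3(\mathbb S^2)$ genuinely solves (\ref{Fu}) pointwise, with no issue of weak versus strong solutions, so Lemma \ref{lem1} applies on all of $\mathbb S^2$ (taking $U = \mathbb S^2$), provided $F^{ij} \in C^1$; here symmetry of $f$ and $f\in C^2$ give $\tilde F \in C^2$ hence $F^{ij} \in C^1$.

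Next I would invoke the two differential inequalities of Lemma \ref{lem1}: $F^{ij}(\rho_u)_{ij} \geq -C_1|\nabla \rho_u|$ and $F^{ij}(\phi_E)_{ij} \geq -C_2|\nabla \phi_E|$ for every unit vector $E$. Since $\mathbb S^2$ is compact and these are (differential-inequality) subsolutions of a uniformly elliptic operator of the form $F^{ij}\partial_{ij} + b^i\partial_i$ with bounded coefficients, the strong maximum principle of Hopf forces each of $\rho_u$ and $\phi_E$ to attain its maximum in the interior, whence each is constant on $\mathbb S^2$. From $\phi_E = \langle E, X_u\rangle$ constant for every unit vector $E$ we conclude $X_u$ is a constant vector $a = (a_1, a_2, a_3) \in \mathbb R^3$, i.e. $\sum_i u_i e_i + u\, \vec x \equiv a$; but this is precisely the statement $u(x) = a_1 x_1 + a_2 x_2 + a_3 x_3$, a linear function (one checks $\langle a, x\rangle$ has support-vector field equal to $a$, and $X_u$ determines $u$). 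Therefore $u^1 - u^2$ is linear, which means $M_1$ and $M_2$ have support functions differing by a linear function, i.e. $M_1 = M_2$ up to a translation by the vector determined by the linear part.

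The main obstacle, and the one point that deserves care, is checking that the coefficients $F^{ij}$ arising in (\ref{Fu}) for the difference $u = u^1 - u^2$ are genuinely $C^1$ (not merely $L^\infty$), since Lemma \ref{lem1} requires $F^{ij} \in C^1(U)$. This is exactly where the hypotheses $f \in C^2$ and $f$ symmetric are used: symmetry guarantees $\tilde F$ extends to a $C^2$ function on a neighborhood of the diagonal in the space of symmetric matrices (the explicit formulas for $\lambda_1, \lambda_2$ show the square root $\sqrt{\sigma_1^2 - 4\sigma_2}$ is singular only where $\lambda_1 = \lambda_2$, but for symmetric $f$ this singularity cancels), and the $C^2$ regularity then passes to the averaged coefficients $F^{ij}(x) = \int_0^1 \partial_{W_{ij}} \tilde F(\cdots)\,dt$, which are $C^1$ in $x$ because $W_{u^l} \in C^1(\mathbb S^2)$ (as $M_l$ is $C^3$) and the path stays in a compact region where $\tilde F \in C^2$. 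A secondary, routine point is verifying the algebraic identity that $\phi_E \equiv \mathrm{const}$ for all $E$ is equivalent to $u$ being the restriction of a linear function on $\mathbb R^3$, and that such a difference of support functions corresponds exactly to a translation.
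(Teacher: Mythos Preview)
Your proposal is correct and follows essentially the same route as the paper: use the symmetry and $C^2$ regularity of $f$ to ensure $F^{ij}\in C^1(\mathbb S^2)$ and $u\in C^3(\mathbb S^2)$, then apply Lemma~\ref{lem1} together with the strong maximum principle on the compact manifold $\mathbb S^2$ to conclude that $X_u$ is constant, hence $u$ is linear. The paper's proof is a two-line compression of exactly this argument; your added detail (in particular the justification that $\phi_E$ constant for all $E$ forces $X_u$ constant, and the remark on why symmetry of $f$ is needed for $C^1$ coefficients) fills in precisely the points the paper leaves implicit.
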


\begin{proof}
Since $f\in C^2$ is symmetric, $F^{ij}$ in (\ref{Fu}) is in $C^1(\mathbb S^2) \text{ and } u\in C^3(\mathbb S^2)$. By Lemma \ref{lem1} and the strong maximum principle, $X_u$ is a constant vector.
\end{proof}

\bigskip

To precede further, set
\begin{eqnarray*}
\mathcal M=\{p\in \mathbb S^2\ :\ \rho_u(p)=\max_{q\in\mathbb S^2}\rho_u(q)\},\end{eqnarray*}
for each unit vector $E\in \mathbb  R^3$, \begin{eqnarray*}
\mathcal{M}_{E} = \{ p\in \mathbb S^2\ : \ \phi_E(p)= \max_{q\in \mathbb S^2}\phi_E(q)\} .
\end{eqnarray*}

\begin{lem}\label{noisolate}
 $\mathcal M$ and $\mathcal M_E$ have no isolated points.
\end{lem}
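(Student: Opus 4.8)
The plan is to reduce both assertions to a single claim: \emph{for every unit vector $E\in\mathbb R^3$, $\phi_E$ has no strict local maximum}, i.e. there is no $p\in\mathbb S^2$ and $r>0$ with $\phi_E(x)<\phi_E(p)$ whenever $0<\mathrm{dist}(x,p)<r$. Granting this, if $p\in\mathcal M_E$ were isolated then $\phi_E<\max_{\mathbb S^2}\phi_E=\phi_E(p)$ on a punctured neighborhood of $p$, which is a strict local maximum, a contradiction. If $p\in\mathcal M$ were isolated, we may assume $\rho_u\not\equiv 0$ (otherwise $X_u\equiv 0$ and $\mathcal M=\mathbb S^2$); then $\rho_u(p)=\max\rho_u>0$, and setting $E=X_u(p)/|X_u(p)|$ we get, for $x$ near $p$ with $x\neq p$,
\[
\phi_E(x)=\langle E,X_u(x)\rangle\le |X_u(x)|=\sqrt{\rho_u(x)}<\sqrt{\rho_u(p)}=\langle E,X_u(p)\rangle=\phi_E(p),
\]
since then $x\notin\mathcal M$; again a strict local maximum, a contradiction. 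So everything reduces to the claim.

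If $F^{ij}\in C^1$ and $u\in C^3$ near $p$, the claim is immediate: by Lemma~\ref{lem1}, $\phi_E$ is a subsolution of a uniformly elliptic operator with bounded first-order and vanishing zeroth-order coefficients, so the strong maximum principle rules out a strict interior maximum --- this is exactly the mechanism behind Corollary~\ref{C3}. The point of the lemma is to reach the same conclusion under the genuine hypotheses $F^{ij}\in L^\infty$, $u\in W^{2,2}(\mathbb S^2)\cap C^{1,\alpha}$, for which Lemma~\ref{lem1} is not available (the identity for $F^{ij}(\phi_E)_{ij}$ in its proof intrinsically involves derivatives of $F^{ij}$). This is where the Bers--Nirenberg theory enters. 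In a geodesic chart centered at $p$, equation (\ref{Fu}) makes $(u,\nabla u)$ the solution of a first-order uniformly elliptic system with bounded measurable coefficients, and the two-dimensional Bers--Nirenberg representation theorem expresses its solutions through holomorphic data modulo a Hölder-continuous invertible factor; since $\phi_E=\langle E,\nabla u\rangle+\langle E,\vec{x}\rangle\,u$ is a smooth combination of the components of this system, one deduces that near $p$ the function $\phi_E-\phi_E(p)$ is either identically zero --- contradicting that the maximum at $p$ is strict --- or has the sign-changing behavior of the real part of a nonconstant holomorphic function with a zero at $p$ (here one uses the weak unique continuation theorem to exclude infinite-order vanishing without identical vanishing). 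In either case $p$ is not a strict local maximum of $\phi_E$, which proves the claim.

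The heart of the matter, and the main obstacle, is precisely this last step. Because $\phi_E$ couples $u$ with its first derivatives while $F^{ij}$ is merely bounded, no classical strong maximum principle applies to $\phi_E$ directly; one must instead read off the local structure of $\phi_E$ near $p$ from the fine local behavior of solutions of (\ref{Fu}), which is exactly what the Bers--Nirenberg similarity principle and its unique continuation theorem provide. This is also natural geometrically: by Lemma~\ref{det} the difference ``surface'' $X_u$ satisfies $\det W_u\le 0$ and is saddle-shaped wherever $W_u$ is nondegenerate, so it cannot attain its maximal distance from the origin --- nor can a linear functional $\phi_E$ be maximized --- at an isolated normal direction; supplying the analytic substance that makes this rigorous at $C^{1,\alpha}$ regularity is the role of the Bers--Nirenberg structure theory.
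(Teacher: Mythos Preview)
Your reduction of both statements to the single claim ``$\phi_E$ has no strict local maximum'' is correct and elegant; in particular the observation that an isolated point of $\mathcal M$ produces a strict local maximum of $\phi_E$ with $E=X_u(p)/|X_u(p)|$ is exactly the mechanism the paper exploits later, in the proof of Theorem~\ref{alex}. The difficulty is the last step, and there your argument has a genuine gap. The Bers--Nirenberg representation/similarity principle applies to the complex gradient $\nabla u$ (equivalently, to $\nabla\tilde u$), viewed as the solution of a $2\times2$ first-order elliptic system; this is precisely how the paper uses it in Theorem~\ref{alex}. But $\phi_E=\langle E,e_1\rangle u_1+\langle E,e_2\rangle u_2+\langle E,\vec x\rangle u$ is a \emph{variable-coefficient} linear combination of $u,u_1,u_2$, not a component of that $2\times2$ system, and it does not itself satisfy a scalar equation to which the similarity principle applies (indeed, as you note, writing down such an equation would require differentiating $F^{ij}$). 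A smooth variable-coefficient combination of the real and imaginary parts of a pseudoanalytic function need not inherit the ``no isolated extrema / sign-changing zero'' behavior; your sentence ``one deduces that near $p$ the function $\phi_E-\phi_E(p)$ \ldots\ has the sign-changing behavior of the real part of a nonconstant holomorphic function'' is asserted, not proved.

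The paper sidesteps this completely and does \emph{not} use Bers--Nirenberg in Lemma~\ref{noisolate}. Its argument is an approximation: given an isolated maximum $p_0$ of $\rho_u$ (same for $\phi_E$), choose a small geodesic ball $\bar U\ni p_0$, mollify $F^{ij}$ to smooth $F^{ij}_\epsilon$, and solve the Dirichlet problem $F^{ij}_\epsilon(u^\epsilon_{ij}+u^\epsilon\delta_{ij})=0$ in $\bar U$ with $u^\epsilon=u$ on $\partial\bar U$ (this is uniquely solvable after the substitution $u^\epsilon=x_3v^\epsilon$). The two-dimensional $C^{1,\alpha}$ theory gives $u^\epsilon\to u$ in $C^{1,\alpha}_{\mathrm{loc}}$, hence $\rho_{u^\epsilon}\to\rho_u$ in $C^\alpha$, so $\rho_{u^\epsilon}$ also has an interior local maximum in a slightly smaller ball. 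But now $u^\epsilon$ and $F^{ij}_\epsilon$ are smooth, so Lemma~\ref{lem1} and the strong maximum principle force $\rho_{u^\epsilon}$ to be constant there, and in the limit $\rho_u$ is constant near $p_0$, contradicting isolation. Thus the paper manufactures the regularity needed for Lemma~\ref{lem1} by approximation, rather than trying to extract maximum-principle information for $\phi_E$ directly from Bers--Nirenberg; the latter is reserved for $\nabla\tilde u$ in the proof of Theorem~\ref{alex}, where it is genuinely applicable.
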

\begin{proof}
We prove the lemma for $\mathcal M$, the proof for $\mathcal M_E$ is the same.
If point $p_0\in \mathcal M$ is an isolated point, we may assume $p_0=(0,0,1)$.  Pick $\bar U$ a small open geodesic ball centered at $p_0$ such that $\bar  U$ is properly contained in $\mathbb S^2_{+}$, and pick a sequence of smooth $2-$tensor $(F^{ij}_{\epsilon})>0$ which is convergent to $(F^{ij})$ in $L^{\infty}$-norm in $\bar  U$. Consider
\begin{eqnarray}\label{newDP}
\left\{
\begin{matrix}
F^{ij}_{\epsilon}(u^{\epsilon}_{ij}+u^{\epsilon}\delta_{ij})&=&0  \ \text{ in } \bar  U\\
u^{\epsilon}&=&u \ \text{ on } \partial \bar  U.
\end{matrix}\right.
\end{eqnarray}
Since $x_3>0$ in $\mathbb S^2_{+}$, one may
write $u^{\epsilon}=x_3v^{\epsilon}$ in $\bar  U$. As $(x_3)_{ij}=-x_3 \delta_{ij}$, it easy to check $v^{\epsilon}$ satisfies
\[F^{ij}_{\epsilon}v^{\epsilon}_{ij}+b_kv^{\epsilon}_k= 0, \quad \mbox{in $\bar  U$}.\]
Therefore, (\ref{newDP}) is uniquely solvable.

Since $p_0\in \mathcal M$ is an isolated point, there are open geodesic balls $ \bar  U' \subset \bar  U$ centered at $p_0$ and a small $\delta>0$ such that
\begin{eqnarray}\label{delta}
\rho_u(p_0)- \rho_u(p) \geq \delta  \ \text{ for } \ \forall p\in \partial \bar  U'.
\end{eqnarray}
\par
By the $C^{1,\alpha}$ estimates for linear elliptic equation in dimension two and the uniqueness of the Dirichlet problem (\cite{M, BN2, N2}), $\exists \epsilon_k$  such that
\[
\|u-u^{\epsilon_k}\|_{C^{1,\alpha}( \bar U')}\to 0, \quad \|\rho_u-\rho_{u^{\epsilon_k}}\|_{C^{\alpha}(\bar U')}\to 0.
\]
Together with (\ref{delta}), if $\epsilon_k$ small enough, there is a local maximal point of $\rho_{u^{\epsilon_k}}$ in $\bar U'\subset \bar U$.
Since $u^{\epsilon_k}, F^{ij}_{\epsilon} \in C^{\infty}(\bar U')$ satisfy (\ref{newDP}), it follows from Lemma \ref{lem1} and the strong maximum principle that $\rho_{u^{\epsilon_k}}$ must be constant in $\bar  U'$, $\forall \epsilon_k$ in small enough. This implies $\rho$ is constant in $\bar U'$.  Contradiction.
\end{proof}

\medskip

We now prove Theorem \ref{alex}.

\begin{proof}[\bf Proof of Theorem \ref{alex}.]
\par
For any $p_0\in \mathcal M$, if $\rho_u(p_0)=0$, then $u\equiv 0$. We may assume $\rho_u(p_0)>0$. Set $E:=\frac{X_u(p_0)}{|X_u(p_0)|}$. Choose another two unit constant vectors $\beta_1, \beta_2$ with $<\beta_i, \beta_j> =\delta_{ij}, \beta_i \perp E$ for $i, j=1, 2$. Under this orthogonal coordinates in $\mathbb R^3$,
\begin{eqnarray}\label{dicomp}
X_u(p) = a(p) E + b_1(p)\beta_1+ b_2(p) \beta_2, \ \forall p\in \mathcal M_E.
\end{eqnarray}
On the other hand, $\phi_E(p)=\rho^{1/2}_u(p_0), \forall p\in \mathcal M_{E}$. Thus,
\begin{eqnarray}\label{b=0}
a(p)=\rho^{1/2}_u(p_0), \ b_1(p)=b_2(p)=0, \ \forall p\in \mathcal M_{E}.
\end{eqnarray}
Consider the function $\tilde u(x)=u(x)-\rho_u^{1/2}(p_0)E\cdot x$. (\ref{dicomp}) and (\ref{b=0}) yield, $ \forall p\in \mathcal M_E$,
\begin{eqnarray}\label{nabelu}
\quad \nabla_{e_i}\tilde u(p) = \nabla_{e_i}u(p) -\rho_u^{1/2}(p_0)\langle E,  e_i\rangle=\langle X_u(p), e_i\rangle -\rho_u^{1/2}(p_0)\langle E,  e_i\rangle=0.
\end{eqnarray}
Moreover, $\tilde u(x)$ also satisfies equation (\ref{Fu}).
As pointed out in \cite{BN2}, if $\tilde u$ satisfies an elliptic equation, $\nabla \tilde u$ satisfies an elliptic system of equations. Lemma \ref{noisolate}, (\ref{nabelu}) and the Unique Continuation Theorem of Bers-Nirenberg (P. 13 in \cite{BN1}) imply $\nabla\tilde u\equiv 0$. Thus, $\tilde u(x) \equiv \tilde u(p_0) =0$ and $u(x)$ is a linear function on $\mathbb S^2$.
\end{proof}

\bigskip

Theorem \ref{alex0} is a direct consequence of Theorem \ref{alex}. We now prove Theorem \ref{alexB}.

\begin{proof}[\bf Proof of Theorem \ref{alexB}.]
The main step is to show $u=u^1-u^2\in W^{2,2}(\mathbb S^2)$, using the assumption that $W_{u^l}, l=1,2$ are non-singular Radon measures. It follows from the convexity, the spherical hessians $W_{u^l}, l=1,2$ and $W_u$ are defined almost everywhere on $\mathbb S^2$ (Alexandrov's Theorem). So, we can define $F(W_{u^l}), l=1,2$ almost everywhere in $\mathbb S^2$. As $W_u^{l}, l=1,2$ are nonsingular Radon measures, $W_{u^l}\in L^1(\mathbb S^2)$ (see \cite{EG}), we also have $W_{u}\in L^1(\mathbb S^2)$.
Since $u^1, u^2$ satisfy $F(W_{u^1})= F(W_{u^2})$ for almost every parallel normal $x\in \mathbb S^2$, there is $\Omega\subset \mathbb S^2$ with $|\mathbb S^2\setminus \Omega|=0$, such that $W_u$ satisfies following equation {\it pointwise} in $\Omega$,
\begin{equation*}
 F^{ij}(x) (u_{ij}(x)+u(x)\delta_{ij})=0,  \ x\in \Omega,
\end{equation*}
where $F^{ij}=\int_0^1 \frac{\partial F}{\partial w_{ij}}(tW_u^1+(1-t)W_u^2)d t$.
By Lemma \ref{det}, we can obtain that
\begin{eqnarray*}
|W_u|^2=W_{11}^2+ W_{12}^2 + W_{21}^2 +W_{22}^2 \leq -\frac{2\Lambda}{\lambda}\det W_u, \quad \ x\in \Omega.
\end{eqnarray*}
On the other hand,
\begin{eqnarray*}
\det W_u  \leq \det W_{\tilde{u}},
\end{eqnarray*}
where $\tilde{u}=u^1+u^2$. Thus, to prove $u\in W^{2, 2}(\mathbb S^2)$, it suffices to get an upper bound for $\int_{\mathbb S^2} \det W_{\tilde{u}}$.
\par
Recall that $W_{u^l}\in L^1(\mathbb S^2)$, so $u^l\in W^{2, 1}(\mathbb S^2), l=1, 2$ and the same for $\tilde{u}$. This allows us to choose two sequences of smooth convex bodies $\Omega^l_{\epsilon}$ with supporting functions $u^l_{\epsilon}$ such that $|| \tilde{u}_{\epsilon}- \tilde{u}||_{W^{2, 1}(\mathbb S^2) }\rightarrow 0$ as $\epsilon\rightarrow 0$. By Fatou's Lemma and continuity of the area measures,
\begin{eqnarray*}
\int_{\mathbb S^2} \det W_{\tilde{u}}=\int_{\Omega} \det W_{\tilde{u}} \leq \liminf_{\epsilon\rightarrow 0} \int_{\mathbb S^2} \det W_{\tilde{u}_{\epsilon}} \leq V(\Omega^1)+V(\Omega^2)+2V(\Omega^1, \Omega^2),
\end{eqnarray*}
where $V(\Omega^1), V(\Omega^2)$ denote the volume of the convex bodies $\Omega^1$ and $\Omega^2$ respectively and $V(\Omega^1, \Omega^2)$ is the mixed volume.
\par
It follows that $W_u\in L^2(\mathbb S^2)$ and thus, $u\in W^{2, 2}(\mathbb S^2)$. This implies that $u$ is a $W^{2, 2}$ weak solution of the differential equation
\[
 F^{ij}(x) (u_{ij}(x)+u(x)\delta_{ij})=0, \quad \forall x\in \mathbb S^2.
\]
Finally, the theorem follows directly from Theorem \ref{alex}.
\end{proof}

\bigskip

\begin{rem} Alexanderov proved in \cite{Alex39} that, if $u$ is a homogeneous degree $1$ analytic function in $\mathbb R^3$ with $\nabla^2 u$ definite nowhere, then $u$ is a linear function. As a consequence, Alexandrov proved in \cite{Alex66} that if a analytic closed convex surface in $\mathbb R^3$ satisfying the condition $(\kappa_1-c)(\kappa_2-c)\le 0$ at every point for some constant $c$, then it is a sphere. Martinez-Maure gave a $C^2$ counter-example in \cite{Ma} to this statement, see also \cite{Pa}. The counter-examples in \cite{Ma, Pa} indicate that Theorem \ref{alex} is not true if $F^{ij}$ is merely assumed to be degenerate elliptic. It is an interesting question that under what degeneracy condition on $F^{ij}$ so that Theorem \ref{alex} is still true, even in smooth case. This question is related to similar questions in this nature posted by Alexandrov \cite{Alex56} and Pogorelov \cite{P2}. \end{rem}

\bigskip

\noindent {\it Acknowledgement:} The first author would like to thank Professor Louis Nirenberg for stimulation conversations. Our initial proof was the global maximum principle for $C^3$ surfaces Lemma \ref{lem1} and Corollary \ref{C3} (we only realized the connection of the result of \cite{HNY} to Theorem \ref{alex0} afterward). It was Professor Louis Nirenberg who brought our attention to the paper of \cite{Ma} and suggested using the unique continuation theorem of \cite{BN2}. That leads to Theorem \ref{alexB}. We want to thank him for his encouragement and generosity.

 \end{document}